\documentclass[12pt,onecolumn,twoside]{JCTA}

\usepackage{multicol}

\usepackage{amsmath}
\usepackage{amsthm}
\usepackage{amsfonts}
\usepackage{amssymb}
\usepackage{mathrsfs}

\usepackage{lettrine}

\usepackage[sort]{cite}

\usepackage{times}

\usepackage{booktabs}

\usepackage{wasysym}

\usepackage{fancyhdr}

\usepackage{ulem}


\allowdisplaybreaks

\setlength{\parindent}{1em}
\newtheoremstyle{mystyle}{1pt}{1pt}{\normalfont}{\parindent}{\bfseries}{}{1em}{}
\theoremstyle{mystyle}
\newtheorem{Thm}{Theorem}
\newtheorem{Asu}{Assumption}
\newtheorem{Pro}{Proposition}
\newtheorem{Def}{Definition}
\newtheorem{Lem}{Lemma}

\newtheorem{Rem}{Remark}

\newtheorem{Step}{Step}



\linespread{1.0}
\setlength{\parskip}{1pt} 

\setlength{\voffset}{0pt} \setlength{\topmargin}{0pt}
\setlength{\headsep}{5mm} \setlength{\textheight}{237mm}
\setlength{\textwidth}{171mm} \setlength{\hoffset}{10pt}

\makeatletter
\renewenvironment{proof}[1][\proofname]{\par
  \pushQED{\qed}%
  \normalfont 
  \trivlist
  \item[\hskip 1.6em
        \bfseries
    #1]\hskip .5em\ignorespaces
}{%
  \endtrivlist\@endpefalse
} \makeatother

\usepackage{amsfonts}
\usepackage{amsmath}
\usepackage{amsthm}
\usepackage{amssymb}

\newcommand{\br}{\mathbb R}

\newcommand{\tow}{\rightharpoonup}
\numberwithin{equation}{section}
\newtheorem{theorem}{Theorem}[section]
\newtheorem{lemma}[theorem]{Lemma}
\newtheorem{corollary}[theorem]{Corollary}

\newtheorem{problem}{Problem}[section]

\begin{document}                                 
\makeatletter
\def\@autr{{M.Boukrouche and D.A.Tarzia}}             
\makeatother

\begin{frontmatter}     

\title{Convergence of optimal control problems governed by second kind parabolic variational inequalities}

\author[1]{ Mahdi BOUKROUCHE},{}   
\author[2]{ Domingo A. TARZIA},{}

\address[1] {Lyon University, UJM F-42023, CNRS UMR 5208, Institut
Camille Jordan, 23 Paul Michelon,  42023 Saint-Etienne Cedex 2, France.
  E-mail: Mahdi.Boukrouche@univ-st-etienne.fr; }
\address[2] {Departamento de Matem\'atica-CONICET, FCE,
Univ. Austral, Paraguay 1950, S2000FZF Rosario, Argentina.
E-mail: DTarzia@austral.edu.ar}                   

\begin{keyword}
 Parabolic variational inequalities of the second kind, Aubin compactness arguments, Control border,
Convergence of optimal control problems, Tresca boundary conditions, free boundary problems.
\end{keyword}

\begin{abstract}
We consider a family of optimal control problems where the control variable is given
by a boundary condition of Neumann type. This family is governed by  parabolic variational
inequalities of the second kind.
We prove the strong  convergence of the  optimal controls and state systems associated to this family
to a similar  optimal control problem. This work solves the open problem left by the authors in IFIP TC7 CSMO2011.
\end{abstract}

\end{frontmatter}

\makeatletter
\renewcommand{\headrulewidth}{0pt}
\fancypagestyle{plain}{\fancyhf{} \fancyhead[LE,LO]{\small\it J
Control Theory Appl} \lfoot{} \cfoot{} \rfoot{}}
\thispagestyle{plain}

\begin{multicols}{2}      

\section{Introduction}
The motivation of this paper is to prove the strong  convergence of the optimal controls (borders)
and state systems  associated to a family of  second kind parabolic variational inequalities.
With this paper, we solve the open question, left  in  \cite{MB-DT4} and
we  generalize our work \cite{MB-DT3}, to study the {\it Control border}.

To illustrate the problem considered, we consider in the following,
just  as examples, two  free boundary problems which leads to
second kind parabolic variational inequalities.

We assume  that the boundary of a multidimensional
regular domain $\Omega$ is given by 
$\partial\Omega=\Gamma_{1}\cup\Gamma_{2}\cup\Gamma_{3}$ with
$meas(\Gamma_{1}) >0$ and $meas(\Gamma_{3}) >0$.
We consider a family of optimal control problems where the control variable is given
by a boundary condition of Neumann type whose state system is
governed by a free boundary problem with Tresca conditions on a portion
$\Gamma_{2}$ of the boundary,  with a flux $f$ on $\Gamma_{3}$ {\it as
the control variable}, given by:
\begin{problem}\label{pb4.1}
\begin{eqnarray*}
 \dot{u} - \Delta u = g  \quad \mbox{in} \quad \Omega\times(0, T), \label{III.1}
\end{eqnarray*}
\begin{eqnarray*}
&&\left|{\partial u\over\partial n}\right| <q  \Rightarrow  u=0,\mbox{   on  }
\Gamma_{2}\times(0, T), \\
&&\left|{\partial u\over\partial n}\right| =q  \Rightarrow \exists k>0 : \quad u=-
k{\partial
u\over\partial n},
\mbox{   on  }  \Gamma_{2}\times(0, T),
\end{eqnarray*}
 \begin{eqnarray*}
 u=b  \quad \mbox{on} \quad\Gamma_{1}\times(0, T),
\end{eqnarray*}
\begin{eqnarray*}
- {\partial u\over\partial n}= f  \quad \mbox{on} \quad\Gamma_{3}\times(0, T),
\end{eqnarray*}
with the initial condition
 \begin{eqnarray*}
  u(0)= u_{b} \quad \mbox{on} \quad\Omega,
 \end{eqnarray*}
and the compatibility condition on $\Gamma_{1}\times(0, T)$
 \begin{eqnarray*}
  u_{b} = b\quad \mbox{on} \quad\Gamma_{1}\times(0, T)
 \end{eqnarray*}
\end{problem}
where $q>0$ is the Tresca friction coefficient on  $\Gamma_{2}$
 (\cite{ACF2002}, \cite{MB-EM2008}, \cite{Duvaut-JJL72}). We define the spaces
$\mathcal{F}=L^{2}((0,T)\times \Gamma_{3})$,
 $V=H^{1}(\Omega)$,  $V_{0}=\{v\in V :
v_{|_{\Gamma_{1}}}=0\}$, $H= L^{2}(\Omega)$,
$\mathcal{H}=L^{2}(0,T; H)$, $\mathcal{V}=L^{2}(0,T; V)$
 and the closed convex set $K_{b}=\{v\in V :  v_{|_{\Gamma_{1}}}=b\}$.
Let given
\begin{eqnarray}\label{hyp}
g\in \mathcal{H},\quad  b\in L^{2}(0 , T, H^{1/2}(\Gamma_{1})),
\quad f \in \mathcal{F}\nonumber\\
q\in L^{2}((0 , T)\times \Gamma_{2}), \,\,  q>0,\quad  u_{b}
\in K_{b}.
 \end{eqnarray}

The variational formulation of {\it Problem} \ref{pb4.1}  leads to
the following parabolic variational problem:
\begin{problem}\label{pbf}
Let given $g$, $b$,  $q$, $u_{b}$ and  $f$ as in (\ref{hyp}).
 Find $u=u_{f}\in \mathcal{C}(0, T, H)\cap
L^{2}(0 , T; K_{b})$ with $\dot{u}\in \mathcal{H}$, such that
$u(0)=u_{b}$, and for $t\in (0 , T)$
\begin{eqnarray*}
 <\dot{u}  , v-u> + a(u , u-v) +\Phi(v) -\Phi(u)  \geq (g , v-u) \nonumber\\- \int_{\Gamma_{3}}f (v-u)ds,
\quad
\forall v\in K_{b}.\label{iv1f}
\end{eqnarray*}
where  $(\cdot , \cdot)$ is the scalar product in $H$,  $a$ and $\Phi$ are defined by
\begin{eqnarray}\label{phi}
a(u , v)= \int_{\Omega} \nabla u\nabla v dx, \mbox{ and }  \Phi(v)=\int_{ \Gamma_{2}}q|v| ds.
\end{eqnarray}
\end{problem}
The functional $\Phi$ comes from the Tresca condition on $\Gamma_{2}$ \cite{ACF2002},
\cite{MB-EM2008}.
We consider also the following problem where we  change, in {\it
Problem} \ref{pb4.1}, only the Dirichlet condition on
$\Gamma_{1}\times(0, T)$ by the Newton law or a Robin boundary
condition i.e.
\begin{problem}\label{pb4.1-h}
\begin{eqnarray*}
 \dot{u} - \Delta u = g  \quad \mbox{in} \quad \Omega\times(0, T), \label{III.1}
\end{eqnarray*}
\begin{eqnarray*}
&&\left|{\partial u\over\partial n}\right| <q  \Rightarrow  u=0,
\mbox{   on  }  \Gamma_{2}\times(0, T),\\
&&\left|{\partial u\over\partial n}\right| =q  \Rightarrow \exists k>0 : \quad u=-
k{\partial
u\over\partial n},
\mbox{   on  }  \Gamma_{2}\times(0, T),
\end{eqnarray*}
 \begin{eqnarray*}
- {\partial u\over\partial n}= h(u-b)   \mbox{ on } \Gamma_{1}\times(0,
T), \quad
\end{eqnarray*}
\begin{eqnarray*}
- {\partial u\over\partial n}= f  \mbox{ on } \Gamma_{3}\times(0,
T),
\end{eqnarray*}
with the initial condition
 \begin{eqnarray*}
  u(0)= u_{b} \quad \mbox{on} \quad\Omega,
 \end{eqnarray*}
and the condition of compatibility on $\Gamma_{1}\times(0, T)$
 \begin{eqnarray*}
  u_{b} = b\quad \mbox{on} \quad\Gamma_{1}\times(0, T).
 \end{eqnarray*}
\end{problem}
The variational formulation of the problem (\ref{pb4.1-h})  leads to
the following parabolic variational problem
\begin{problem}\label{pbf-h}
Let given $g$, $b$,  $q$, $u_{b}$ and  $f$ as in (\ref{hyp}). For all $h>0$, find $u=u_{hf}$ in
 $\mathcal{C}(0, T, H)\cap \mathcal{V}$ with $\dot{u}$ in $\mathcal{H}$, such
that $u(0)=u_{b}$, and for $t\in (0 , T)$
\begin{eqnarray*}
 <\dot{u}  , v-u> + a_{h}(u , u-v) +\Phi(v) -\Phi(u)  \geq (g , v-u)\nonumber\\
- \int_{\Gamma_{3}}f (v-u)ds
+ h \int_{\Gamma_{1}}b (v-u)ds, \quad \forall v\in V,\label{iv1fh}
\end{eqnarray*}
where $a_{h}$ is defined by
 \begin{eqnarray*}
   a_{h}(u,v)= a(u,v)+ h\int_{\Gamma_{1}}u v
 ds.
 \end{eqnarray*}
\end{problem}
Moreover from {\rm\cite{KS1980, R1987, TT, T} we have that:
 $ \exists\lambda_{1}>0$  such that  $$\lambda_{h}\|v\|_{V}^{2}
\leq a_{h}(v , v)  \quad \forall v\in V, \mbox{ with }  \lambda_{h}=
\lambda_{1}\min\{1  \,, \, h\}$$ that is,  $a_{h}$ is also a
bilinear, continuous, symmetric and coercive form  $V\times V$ to $\br$.
The existence and uniqueness of the solution to each of the above
{\it Problem} \ref{pbf}  and {\it Problem} \ref{pbf-h}, is well
known see for example \cite{Br1972}, \cite{Chipot2000},
\cite{Duvaut-JJL72}.

The main goal of this paper is to prove in Section \ref{sec2} the existence and uniqueness of a family of
{\it optimal control problems}  \ref{np} and \ref{nph}
where the control variable is given
by a boundary condition of Neumann type whose state system is governed by a free boundary problem with Tresca conditions on a portion
$\Gamma_{2}$ of the boundary,  with a flux $f$ on $\Gamma_{3}$ {\it as
the control variable}, using a regularization method to overcome the nondifferentiability of the functional $\Phi$.
 Then in Section  \ref{secLim}  we study the convergence when $h\to +\infty$ of  the state systems and optimal controls
associated to the problem \ref{nph} to the corresponding state system and optimal control
associated to problem \ref{np}.  In order to obtain this last result we obtain an auxiliary strong  convergence by using
the Aubin compactness arguments see Lemma \ref{lem1.4}. This paper completes our previous paper \cite{MB-DT3}
and solves the open problem left in \cite{MB-DT4}.

Remark here that our study still valid with the bilinear  form $a$ in more general cases,  provided that $a$
must be symmetric, coercive and continuous from $V\times V$ to $\br$.

\section{Boundary optimal control problems}\label{sec2}
Let $M>0$ be a constant and we define the space
$$\mathcal{F_{-}}=\{f\in \mathcal{F} : \quad f\leq 0\}.$$
We consider the following Neumannn boundary optimal control problems defined by
 \cite{KeMu2008, KeSa1999,  JLL, T2010}
\begin{problem}\label{np}
 Find the optimal control $f_{op}\in \mathcal{F_{-}}$ such that
\begin{eqnarray}\label{npeq1}
 J(f_{op}) = \min_{f\in \mathcal{F_{-}}} J(f)
\end{eqnarray}
\end{problem}
where the cost functional $J: \mathcal{F_{-}} \to \br^{+}$ is given
by
\begin{eqnarray}\label{npeq2}
J(f)= {1\over 2}\|u_{f}\|^{2}_{\mathcal{H}} + {M\over 2}\|f\|^{2}_{\mathcal{F}}
\,\, (M>0)
\end{eqnarray}
and  $u_{f}$ is the unique  solution of the  {\it Problem} \ref{pbf} for a given $f\in \mathcal{F_{-}}$.
\begin{problem}\label{nph}
Find the optimal control $f_{{op}_{h}}\in \mathcal{F_{-}}$ such that
\begin{eqnarray}
 J(f_{{op}_{h}}) = \min_{f\in \mathcal{F_{-}}} J_{h}(f)
\end{eqnarray}
\end{problem}
where the cost functional $J_{h}: \mathcal{F_{-}} \to \br^{+}$ is
given by
\begin{eqnarray}
 J_{h}(f)= {1\over 2}\|u_{hf}\|^{2}_{\mathcal{H}} + {M\over 2}\|f\|^{2}_{\mathcal{F}}
\,\, (M>0,  \, h>0)
\end{eqnarray}
and  $u_{hf}$ is the unique solution of {\it Problem}  \ref{pbf-h} for a given $f\in \mathcal{F_{-}}$ and $h>0$.

\begin{theorem}\label{th3}
 Under the assumptions $g\geq 0$ in $\Omega\times (0 , T)$,
$b\geq 0$ on $\Gamma_{1}\times (0 , T)$ and  $u_{b}\geq 0$ in $\Omega$,
we have the following properties:\newline
\noindent(a) The cost functional  $J$ is strictly convex on $\mathcal{F_{-}}$,
 \newline
\noindent(b) There exists a unique optimal control $f_{op}\in \mathcal{F_{-}}$
solution of the Neumann boundary optimal control Problem \ref{np}.
\end{theorem}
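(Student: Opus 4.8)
The strategy is classical: to establish (b), the standard approach is to show that $J$ is strictly convex, coercive, and lower semicontinuous on the closed convex set $\mathcal{F_-}$, and then invoke the direct method of the calculus of variations. Part (a) is the key structural ingredient. Since $\|f\|_{\mathcal{F}}^2$ is obviously strictly convex, it suffices to show that $f\mapsto \tfrac12\|u_f\|_{\mathcal{H}}^2$ is convex, and for this the main point is to prove that the solution map $f\mapsto u_f$ is \emph{affine} on $\mathcal{F_-}$ (or at least that $\|u_f\|_{\mathcal{H}}$ is a convex function of $f$). Here is where the sign hypotheses $g\geq 0$, $b\geq 0$, $u_b\geq 0$ enter: by a comparison/maximum principle argument applied to {\it Problem} \ref{pbf}, under these hypotheses and since $f\leq 0$ on $\Gamma_3$ forces $-\partial u/\partial n = f \leq 0$, one expects $u_f \geq 0$ a.e. in $\Omega\times(0,T)$.

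First I would establish this nonnegativity: take $v = u_f + u_f^-$ (equivalently test with $v = u_f^+$ in a suitably rewritten inequality, being careful that $v$ must lie in $K_b$, which uses $b\geq 0$), use $\Phi(u_f^+)\leq \Phi(u_f)$, and the standard Stampacchia truncation identities to conclude $\|u_f^-\|=0$. Next, having $u_f\geq 0$, one shows the map $f\mapsto u_f$ behaves affinely \emph{on the relevant cone}: for $f_1,f_2\in\mathcal{F_-}$ and $\theta\in[0,1]$, set $f=\theta f_1+(1-\theta)f_2$ and $w=\theta u_{f_1}+(1-\theta)u_{f_2}$; add the two variational inequalities (for $u_{f_1}$ tested against a convex-combination target, and likewise for $u_{f_2}$), using convexity of $\Phi$, to show $w$ satisfies the inequality defining $u_f$, hence $w=u_f$ by uniqueness. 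This gives $u_{\theta f_1+(1-\theta)f_2}=\theta u_{f_1}+(1-\theta)u_{f_2}$, so $f\mapsto \|u_f\|_{\mathcal{H}}$ is convex (composition of a linear map with a norm), and since $u_f\not\equiv 0$ whenever the data are nontrivial, combined with the strictly convex term $\tfrac{M}{2}\|f\|_{\mathcal{F}}^2$ we get strict convexity of $J$. This proves (a).

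For (b): $\mathcal{F_-}$ is a nonempty, closed, convex subset of the Hilbert space $\mathcal{F}$. The functional $J$ is coercive on $\mathcal{F_-}$ because $J(f)\geq \tfrac{M}{2}\|f\|_{\mathcal{F}}^2\to+\infty$ as $\|f\|_{\mathcal{F}}\to\infty$. Take a minimizing sequence $(f_n)$; it is bounded in $\mathcal{F}$, so a subsequence converges weakly, $f_n\tow f_{op}$, and $f_{op}\in\mathcal{F_-}$ since $\mathcal{F_-}$ is weakly closed (convex and closed). It remains to pass to the limit in $J$: the term $\|f\|_{\mathcal{F}}^2$ is weakly lower semicontinuous; for the state term one needs $u_{f_n}\to u_{f_{op}}$ in $\mathcal{H}$ (or at least weak lower semicontinuity of $f\mapsto\|u_f\|_{\mathcal{H}}$), which follows from stability of the solution of {\it Problem} \ref{pbf} with respect to $f$ — obtain a priori bounds on $u_{f_n}$ and $\dot u_{f_n}$ uniform in $n$ (testing the variational inequality with $v=u_b$ or $v=$ the state of a fixed control), extract weak limits, use the Aubin–Lions compactness lemma to get strong convergence in $\mathcal{H}$, and pass to the limit in the inequality using the convexity/weak-lsc of $\Phi$ to identify the limit as $u_{f_{op}}$. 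Then $J(f_{op})\leq\liminf J(f_n)=\inf J$, so $f_{op}$ is a minimizer; uniqueness is immediate from the strict convexity in (a).

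The main obstacle is the nonnegativity/comparison step for the second-kind variational inequality: the nondifferentiable term $\Phi$ and the constraint set $K_b$ mean one cannot simply test with arbitrary truncations, so one must choose the test function so that it stays admissible and the $\Phi$ terms have the right sign — this is exactly where $b\geq 0$ is needed (to keep $u_f^+\in K_b$) and where the regularization of $\Phi$ mentioned in the introduction may be invoked to justify the truncation argument cleanly before passing to the limit in the regularization parameter.
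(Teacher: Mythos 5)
Your overall architecture (prove convexity of $J$, then get existence by the direct method plus coercivity and weak lower semicontinuity, and uniqueness from strict convexity) matches the paper, and your part~(b) is fine and in fact more complete than the paper's one-line conclusion. However, there is a genuine gap at the heart of part~(a): you claim that the solution map $f\mapsto u_f$ is \emph{affine} on $\mathcal{F_-}$, to be proved by ``adding the two variational inequalities'' for $u_{f_1}$ and $u_{f_2}$ and invoking uniqueness. This is false for variational inequalities (of the first or second kind), and the proposed addition argument does not close: the terms $a(u_{f_i},u_{f_i}-v)$ and $\langle\dot u_{f_i},v-u_{f_i}\rangle$ are quadratic in the state, the convexity of $\Phi$ produces an inequality in the wrong direction for showing that $w=\theta u_{f_1}+(1-\theta)u_{f_2}$ satisfies the limiting inequality, and even the source terms fail to combine, since $\theta f_1(v-u_{f_1})+(1-\theta)f_2(v-u_{f_2})$ differs from $f(v-w)$ by $\theta(1-\theta)(f_1-f_2)(u_{f_1}-u_{f_2})$. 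If the map were affine, the sign hypotheses on $g$, $b$, $u_b$ and the restriction to $f\le 0$ would be superfluous; the entire point of this theorem (as opposed to the variational-equality case of \cite{MT2007}, where the solution map \emph{is} affine) is that affineness fails here.

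What the paper actually proves is a one-sided \emph{comparison}, not an identity: setting $u_4(\mu)=u_{\mu f_1+(1-\mu)f_2}$ and $u_3(\mu)=\mu u_{f_1}+(1-\mu)u_{f_2}$, one shows $u_4(\mu)\le u_3(\mu)$ pointwise (via the regularization $\Phi_\varepsilon$ of $\Phi$, a comparison argument for the regularized problems, and passage to the limit $\varepsilon\to 0$), and separately $u_4(\mu)\ge 0$ by the weak maximum principle using $g\ge 0$, $b\ge 0$, $u_b\ge 0$ and $f\le 0$. The two facts together give $\|u_4(\mu)\|_{\mathcal{H}}^2\le\|u_3(\mu)\|_{\mathcal{H}}^2$, which is exactly the ingredient needed in the algebraic identity
\begin{equation*}
\mu J(f_1)+(1-\mu)J(f_2)-J(f_3(\mu))=\tfrac12\bigl(\|u_3(\mu)\|_{\mathcal{H}}^2-\|u_4(\mu)\|_{\mathcal{H}}^2\bigr)+\tfrac12\mu(1-\mu)\|u_{f_1}-u_{f_2}\|_{\mathcal{H}}^2+\tfrac{M}{2}\mu(1-\mu)\|f_1-f_2\|_{\mathcal{F}}^2
\end{equation*}
to conclude strict convexity. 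You correctly sensed that nonnegativity of the state and the regularization of $\Phi$ are involved, but you deploy them in the wrong place: nonnegativity is not a stepping stone to affineness, it is what converts the pointwise domination $0\le u_4\le u_3$ into the $L^2$-norm inequality. To repair your proof, replace the affineness step by this comparison argument.
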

\begin{proof}
We give some sketch of the proof, following \cite {MB-DT3} we generalize for parabolic variational
inequalities of the second kind, given in {\it Problem} \ref{pbf}, the estimates obtained for convex combination
  between $u_{4}(\mu)= u_{\mu f_{1}+(1-\mu) f_{2}}$,
and  $u_{3}(\mu)= \mu u_{f_{1}}+(1-\mu) u_{f_{2}}$, for any two
element $f_{1}$ and $f_{2}$ in $\mathcal{F}$. The main difficulty, to prove this result comes from the fact
that the functional $\Phi$ is not differentiable. To overcome this
difficulty, we use the regularization  method and consider for
$\varepsilon >0$ the following approach of $\Phi$ defined by

\begin{eqnarray}
\Phi_{\varepsilon}(v)= \int_{\Gamma_{2}}q\sqrt{\varepsilon^{2} +
|v|^{2}} ds, \qquad \forall v\in V,
\end{eqnarray}
which is Gateaux differentiable, with
$$\langle \Phi'_{\varepsilon}(w) \,, \, v\rangle =  \int_{\Gamma_{2}}
{qw v \over \sqrt{\varepsilon^{2} +|w|^{2}}} ds \qquad \forall (w ,
v)\in V^{2}.
$$
We define $u^{\varepsilon}$ as the unique solution of the
corresponding parabolic variational inequality for all $\varepsilon
>0$. We obtain that for all  $\mu\in [0 , 1]$ we have
$u^{\varepsilon}_{4}(\mu)\leq u^{\varepsilon}_{3}(\mu)$ for all
$\varepsilon >0$.

 When $\varepsilon\to 0$ we have that: for
 $i= 1,\cdots, 4$.
\begin{eqnarray}
u^{\varepsilon}_{i}\to u_{i}\mbox{ strongly in } \mathcal{V}\cap L^{\infty}(0 , T; H).
\end{eqnarray}
As  $f\in \mathcal{F_{-}}$, $g\geq 0$ in $\Omega\times (0 , T)$,
$b\geq 0$ in $\Gamma_{1}\times (0 , T)$ and $u_{b}\geq 0$ in $\Omega$, we obtain by the weak maximum principle that
for all  $\mu\in [0 , 1]$ we have  $0 \leq u_{4}(\mu)$, so following \cite{MB-DT3} we get
\begin{equation}
 0 \leq u_{4}(\mu) \leq u_{3}(\mu)\quad in \quad \Omega\times[0 , T],
    \quad \forall \mu\in [0 , 1].
\end{equation}
Then for all $\mu\in ]0 , 1[$, and  for all $f_{1}, f_{2}$ in
$\mathcal{F_{-}}$, and by using $f_{3}(\mu)= \mu f_{1} + (1-\mu)f_{2}$
we obtain that:
\begin{eqnarray}
&&\mu J(f_{1})+ (1-\mu)J(f_{2})- J(f_{3}(\mu))=
\nonumber\\
&& {1 \over
2}\left(\|u_{3}(\mu)\|_{\mathcal{H}}^{2}
-\|u_{4}(\mu)\|_{\mathcal{H}}^{2}\right)
+{1 \over 2}\mu(1-\mu) \|u_{f_{1}}-u_{f_{2}}\|_{\mathcal{H}}^{2}
\nonumber\\
&&+{M
\over 2}\mu(1-\mu) \|f_{1}-f_{2}\|_{\mathcal{F}}^{2}.\qquad
\end{eqnarray}
Then $J$ is strictly convex functional on $\mathcal{F_{-}}$ and
therefore there exists a unique optimal $f_{op}\in \mathcal{F_{-}}$
solution of the Neumann boundary optimal control {\it Problem }\ref{np}. $\Box$
\end{proof}

\begin{theorem}
 Under the assumptions $g\geq 0$ in $\Omega\times (0 , T)$,
$b\geq 0$ in $\Gamma_{1}\times (0 , T)$ and $u_{b}\geq 0$ in $\Omega$,
 we have the following properties:\newline
\noindent(a) The cost functional  $J_{h}$ are strictly convex on
 $\mathcal{F_{-}}$, for all $h>0$,\newline
\noindent(b) There exists a unique optimal control $f_{h_{op}}\in \mathcal{F_{-}}$
solution of the Neumann boundary optimal control Problem
\ref{nph}, for all $h>0$.
\end{theorem}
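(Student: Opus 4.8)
The plan is to argue exactly as in the proof of Theorem~\ref{th3}, with the bilinear form $a$ replaced by $a_h$ and {\it Problem}~\ref{pbf} replaced by {\it Problem}~\ref{pbf-h}. The key observation is that, for a fixed $h>0$, none of the estimates used for {\it Problem}~\ref{pbf} is disturbed by the additional terms $h\int_{\Gamma_1}uv\,ds$ appearing in $a_h$ and $h\int_{\Gamma_1}b(v-u)\,ds$ appearing on the right hand side of {\it Problem}~\ref{pbf-h}: indeed $a_h$ is bilinear, symmetric, continuous and coercive on $V\times V$ with coercivity constant $\lambda_h=\lambda_1\min\{1,h\}>0$, and $b\ge0$ on $\Gamma_1\times(0,T)$, so the extra boundary term on the right hand side has the favourable sign for the maximum principle.

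Fix $h>0$. First I would regularize: for $\varepsilon>0$ replace $\Phi$ by the Gateaux differentiable functional $\Phi_\varepsilon$ and let $u^\varepsilon=u^\varepsilon_{hf}$ denote the unique solution, in $\mathcal{C}(0,T,H)\cap\mathcal{V}$ with $\dot u^\varepsilon\in\mathcal{H}$, of the corresponding (now differentiable) parabolic variational problem associated with $a_h$. Given $f_1,f_2\in\mathcal{F_{-}}$ and $\mu\in[0,1]$, set $f_3(\mu)=\mu f_1+(1-\mu)f_2$, $u^\varepsilon_4(\mu)=u^\varepsilon_{hf_3(\mu)}$ and $u^\varepsilon_3(\mu)=\mu u^\varepsilon_{hf_1}+(1-\mu)u^\varepsilon_{hf_2}$. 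Subtracting the regularized equations, testing against $\left(u^\varepsilon_4(\mu)-u^\varepsilon_3(\mu)\right)^{+}\in V$ (which is legitimate since the test space is $V$), and using the convexity and monotonicity properties of $\Phi_\varepsilon$ and $\Phi'_\varepsilon$, the coercivity of $a_h$ and Gronwall's lemma, one gets $u^\varepsilon_4(\mu)\le u^\varepsilon_3(\mu)$ a.e. in $\Omega\times(0,T)$, for every $\varepsilon>0$ and every $\mu\in[0,1]$, exactly as for {\it Problem}~\ref{pbf}.

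Next I would let $\varepsilon\to0$. The a priori bounds coming from the coercivity of $a_h$, uniform in $\varepsilon$, together with $0\le q\sqrt{\varepsilon^{2}+|v|^{2}}-q|v|\le q\,\varepsilon$, give, for $i=1,\dots,4$, that $u^\varepsilon_i\to u_i$ strongly in $\mathcal{V}\cap L^{\infty}(0,T;H)$, where $u_i$ is the corresponding solution of {\it Problem}~\ref{pbf-h}; hence $u_4(\mu)\le u_3(\mu)$ in the limit. Since $f\le0$ on $\Gamma_3$, $g\ge0$ in $\Omega\times(0,T)$, $b\ge0$ on $\Gamma_1\times(0,T)$ and $u_b\ge0$ in $\Omega$, the weak maximum principle applied to {\it Problem}~\ref{pbf-h} yields $0\le u_4(\mu)$, so that
\begin{equation}
0\le u_4(\mu)\le u_3(\mu)\quad\mbox{in}\quad\Omega\times[0,T],\qquad\forall\,\mu\in[0,1].
\end{equation}
With $u_4(\mu)=u_{hf_3(\mu)}$ and $u_3(\mu)=\mu u_{hf_1}+(1-\mu)u_{hf_2}$, expanding the squared $\mathcal{H}$-norms as in Theorem~\ref{th3} gives, for $\mu\in\,]0,1[$ and $f_1,f_2\in\mathcal{F_{-}}$,
\begin{eqnarray}
&&\mu J_h(f_1)+(1-\mu)J_h(f_2)-J_h(f_3(\mu))=\nonumber\\
&&{1\over2}\left(\|u_3(\mu)\|_{\mathcal{H}}^{2}-\|u_4(\mu)\|_{\mathcal{H}}^{2}\right)+{1\over2}\mu(1-\mu)\|u_{hf_1}-u_{hf_2}\|_{\mathcal{H}}^{2}\nonumber\\
&&+{M\over2}\mu(1-\mu)\|f_1-f_2\|_{\mathcal{F}}^{2}.
\end{eqnarray}
Since $0\le u_4(\mu)\le u_3(\mu)$ pointwise, the first bracket is $\ge0$, and for $f_1\ne f_2$ the last term is $>0$; hence $J_h$ is strictly convex on $\mathcal{F_{-}}$ for every $h>0$, which proves (a).

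For (b), $\mathcal{F_{-}}$ is a nonempty, closed and convex, hence weakly closed, subset of the Hilbert space $\mathcal{F}$; the solution map $f\mapsto u_{hf}$ is Lipschitz from $\mathcal{F}$ into $\mathcal{H}$ by the standard stability estimate for second kind parabolic variational inequalities (again using the coercivity of $a_h$), so $J_h$ is continuous, it is strictly convex by (a), and coercive because $J_h(f)\ge{M\over2}\|f\|_{\mathcal{F}}^{2}$. By the direct method a convex, lower semicontinuous, coercive functional attains its infimum on a weakly closed set, so there is a minimizer $f_{h_{op}}\in\mathcal{F_{-}}$ of $J_h$, and strict convexity makes it unique, which proves (b). The main obstacle is the comparison $u^\varepsilon_4(\mu)\le u^\varepsilon_3(\mu)$ and its persistence as $\varepsilon\to0$; but since $h$ is fixed and $b\ge0$ this is precisely the computation of Theorem~\ref{th3} carried out with $a_h$ in place of $a$, the remaining steps being routine. $\Box$
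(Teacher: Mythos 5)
Your proposal is correct and follows essentially the same route as the paper, whose entire proof of this theorem is the single remark that one repeats the argument of Theorem~\ref{th3} with $a_h$ in place of $a$; you simply make explicit the regularization, the comparison $0\le u_4(\mu)\le u_3(\mu)$ (correctly noting that $b\ge 0$ keeps the extra $h\int_{\Gamma_1}b(v-u)\,ds$ term harmless for the maximum principle), the convexity identity, and the direct method for existence. No discrepancy with the paper's intended argument.
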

\begin{proof}
We follow a similar method to the one developed in {Theorem
\ref{th3}} for all $h>0$.  $\Box$
\end{proof}

\section{Convergence when $h\to +\infty$}\label{secLim}

In this  section we study the  convergence of the Neumann optimal control {\it
Problem} \ref{nph} to the optimal control {\it
Problem} \ref{np}  when $h\to \infty$. For a given $f\in \mathcal{F}$   we have first the
following result which generalizes \cite{MB-DT2, MB-DT3, TT, T}.

\begin{lemma}\label{l6.1}  Let $u_{h f}$ be the unique solution of the  problem  {\rm\ref{pbf-h}}
and $u_{f}$ the unique solution of the problem {\rm\ref{pbf}}, then
 $$u_{h f}\to u_{f}\in \mathcal{V}  \mbox{ strongly  as } h\to +\infty, \qquad \forall f\in \mathcal{F}.$$
\end{lemma}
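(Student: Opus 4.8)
The plan is to test each variational inequality with a suitable competitor, extract uniform (in $h$) a priori bounds, pass to a weak limit, and then upgrade weak convergence to strong convergence by a standard energy/monotonicity argument. First I would obtain the uniform estimates. Fix $f\in\mathcal F$ and write $u_h=u_{hf}$. In the inequality for {\it Problem}~\ref{pbf-h} choose the test function $v\in V$ cleverly: the natural choice is to subtract off a fixed lifting $\tilde b\in\mathcal V$ of the Dirichlet datum $b$ on $\Gamma_1$ (so $\tilde b_{|\Gamma_1}=b$), i.e. test with $v=\tilde b$. Using $a_h(u_h,u_h-v)=a(u_h,u_h-v)+h\int_{\Gamma_1}u_h(u_h-v)$ and noting that on $\Gamma_1$ one has $u_h-v=u_h-b$, the awkward boundary terms combine as $h\int_{\Gamma_1}(u_h-b)(u_h-b)=h\|u_h-b\|_{L^2(\Gamma_1)}^2\ge 0$, which can be dropped to the good side. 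Together with $\langle\dot u_h,u_h-v\rangle$, the coercivity of $a$ on $V_0$ plus a trace/Poincar\'e argument (controlling $u_h$ in $V$ by $\nabla u_h$ in $L^2$ and $u_h$ in $L^2(\Gamma_1)$), and $\Phi\ge 0$, one integrates in $t$ and uses Gronwall to get
\begin{eqnarray*}
\|u_h\|_{L^\infty(0,T;H)}+\|u_h\|_{\mathcal V}+\sqrt h\,\|u_h-b\|_{L^2((0,T)\times\Gamma_1)}\le C,
\end{eqnarray*}
with $C$ independent of $h$. A second estimate on $\dot u_h$ in $\mathcal H$ follows by the usual technique (differentiating in time, or testing with $\dot u_h$ after regularising $\Phi$ as in {Theorem~\ref{th3}}), giving $\|\dot u_h\|_{\mathcal H}\le C$.

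Next I would pass to the limit. By the bounds above and reflexivity there is a subsequence with $u_h\rightharpoonup w$ weakly in $\mathcal V$, $\dot u_h\rightharpoonup \dot w$ weakly in $\mathcal H$, and (by Aubin--Lions, the compactness argument invoked in Lemma~\ref{lem1.4}) $u_h\to w$ strongly in $\mathcal H$ and in $\mathcal C(0,T;H)$, so $w(0)=u_b$. The estimate $\|u_h-b\|_{L^2((0,T)\times\Gamma_1)}\le C/\sqrt h\to 0$ together with continuity of the trace forces $w_{|\Gamma_1}=b$, i.e. $w\in L^2(0,T;K_b)$. It remains to show $w$ solves the inequality of {\it Problem}~\ref{pbf}. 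For any $v\in K_b$ we have $v_{|\Gamma_1}=b$, so the extra terms $h\int_{\Gamma_1}(u_h-b)(v-u_h)=-h\|u_h-b\|_{L^2(\Gamma_1)}^2\le 0$ in $a_h(u_h,u_h-v)$ after regrouping (here one writes $a_h(u_h,u_h-v)=a(u_h,u_h-v)+h\int_{\Gamma_1}u_h(u_h-v)$ and uses $v=b$ on $\Gamma_1$), and likewise the forcing term $h\int_{\Gamma_1}b(v-u_h)$ combines to leave exactly $-h\|u_h-b\|^2_{L^2(\Gamma_1)}\le 0$. Hence, for $v\in K_b$,
\begin{eqnarray*}
\langle\dot u_h,v-u_h\rangle+a(u_h,u_h-v)+\Phi(v)-\Phi(u_h)\ge (g,v-u_h)-\int_{\Gamma_3}f(v-u_h)ds.
\end{eqnarray*}
Integrating in $t$, using weak lower semicontinuity of $v\mapsto\int_0^T a(v,v)$ and of $\int_0^T\Phi(v)$, the strong convergence in $\mathcal H$, and the identity $\int_0^T\langle\dot u_h,u_h\rangle=\tfrac12(\|u_h(T)\|_H^2-\|u_b\|_H^2)$ with $\liminf$, one passes to the limit and identifies $w$ as a solution of the time-integrated form of {\it Problem}~\ref{pbf}; by uniqueness $w=u_f$, and since the limit is independent of the subsequence the whole family converges.

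Finally, strong convergence in $\mathcal V$: this is where the real work is. Weak convergence in $\mathcal V$ plus convergence of the norms $\|u_h\|_{\mathcal V}\to\|u_f\|_{\mathcal V}$ (equivalently $\int_0^T a(u_h,u_h)\to\int_0^T a(u_f,u_f)$, using that $a$ induces an equivalent norm on $V_0$ and controlling the $L^2(\Gamma_1)$ part via the $\sqrt h$ estimate) gives strong convergence in a Hilbert space. To get convergence of the energy I would put $v=u_f$ (legitimate since $u_f\in K_b$) in the inequality for $u_h$ and $v=u_h$ — after correcting the trace, e.g. replacing $u_h$ by $u_h-$(its $\Gamma_1$-defect lifted) which costs $o(1)$ — in the inequality for $u_f$, add the two, and exploit the symmetry of $a$ to obtain an inequality of the form $\tfrac12\frac{d}{dt}\|u_h-u_f\|_H^2 + a(u_h-u_f,u_h-u_f)\le (\text{terms}\to 0)$, then integrate. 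The main obstacle is precisely this trace mismatch: $u_h\notin K_b$, so it is not an admissible test function in the problem for $u_f$, and one must carefully absorb the $h\int_{\Gamma_1}$ terms and the defect $\|u_h-b\|_{L^2(\Gamma_1)}$ using the $O(1/\sqrt h)$ bound, making sure no term of size $h\times(1/\sqrt h)=\sqrt h$ survives — it does not, because those terms appear with a favorable sign and get dropped rather than estimated. Once the differential inequality is in hand, Gronwall closes the argument and yields $u_{hf}\to u_f$ strongly in $\mathcal V$.
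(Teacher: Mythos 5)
Your a priori estimates, the passage to the weak limit, and the identification of the limit via uniqueness follow essentially the same route as the paper: test the $h$-inequality with an element of $K_b$ (the paper uses $v=u_f(t)$ rather than a fixed lifting $\tilde b$, but this is immaterial), bound $\dot u_{hf}$ by testing with $u_{hf}\pm\varphi$, $\varphi\in L^2(0,T;H^1_0(\Omega))$, drop the sign-definite term $h\|u_{hf}-b\|^2_{L^2(\Gamma_1)}$ for $v\in K_b$, and conclude $\eta=u_f$ by uniqueness. One caveat on the estimates: your claimed uniform bound $\|\dot u_h\|_{\mathcal H}\le C$ is stronger than what is needed and is not obviously uniform in $h$ (testing with $\dot u_h$ produces $h\frac{d}{dt}\|u_h-b\|^2_{L^2(\Gamma_1)}$-type terms whose control costs a factor $\sqrt h$ when $b$ depends on $t$); the paper settles for the $L^2(0,T;H^{-1}(\Omega))$ bound, which suffices for Aubin--Lions and for (\ref{eqW2}).

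The genuine gap is in your final step. To get convergence of the energies you propose a symmetric Minty-type argument: test the $h$-problem with $u_f$ \emph{and} test Problem \ref{pbf} with $u_h$ after ``correcting the trace,'' asserting that the correction ``costs $o(1)$.'' That assertion is not justified by what you have proved: the defect $\gamma_0 u_h-b$ is $O(1/\sqrt h)$ only in $L^2((0,T)\times\Gamma_1)$, while to produce an admissible competitor in $K_b$ whose distance to $u_h$ is $o(1)$ in $\mathcal V$ you would need smallness of the defect in $H^{1/2}(\Gamma_1)$ (the lifting operator is bounded from $H^{1/2}(\Gamma_1)$ to $V$, not from $L^2(\Gamma_1)$); interpolation with the uniform $\mathcal V$-bound only gives smallness in $H^{s}(\Gamma_1)$, $s<1/2$. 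So the crucial quadratic term is not controlled as written. The paper avoids this entirely with a one-sided argument: putting $v=u_f(t)$ in the $h$-inequality alone and setting $\phi_h=u_{hf}-u_f$ yields
\begin{equation*}
\tfrac12\|\phi_h\|^2_{L^\infty(0,T;H)}+\lambda_h\|\phi_h\|^2_{\mathcal V}+\int_0^T\bigl(\Phi(u_{hf})-\Phi(u_f)\bigr)dt\le \mathcal L(\phi_h),
\end{equation*}
where $\mathcal L$ is a fixed continuous linear functional on $\mathcal V$ (built from $\dot u_f$, $a(u_f,\cdot)$, $g$ and $f$); the right-hand side then vanishes by the weak convergence $\phi_h\rightharpoonup 0$ already established, and weak lower semicontinuity of $\Phi$ finishes the proof. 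You should replace your two-inequality step by this observation, or else supply an actual construction of the admissible competitor together with an estimate of every term it generates.
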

\begin{proof} Following \cite{MB-DT3},
we take  $v=u_{f}(t)$ in the variational inequality of the problem {\rm\ref{pbf-h}}
 where $u= u_{h  f}$,
and recalling that $u_{f}(t)= b$ on $\Gamma_{1}\times]0 , T[$,
 taking $\phi_{h}(t)= u_{h f}(t)- u_{f}(t)$   we obtain for $h>1$, 
that
$\|u_{h f}\|_{\mathcal{V}}$
is also bounded for all $h>1$ and for all $f\in \mathcal{F}$.
Then there exists $\eta\in \mathcal{V}$ such that (when $h\to +\infty$)
\begin{eqnarray*}
u_{h f}\tow \eta \mbox{ weakly in } \mathcal{V}
\end{eqnarray*}
and
\begin{eqnarray*}
u_{h f}\to b  \quad\mbox{strongly on  } L^{2}((0 , T)\times\Gamma_{1})
\end{eqnarray*}
 so $\eta(0)= u_b$.

Let $\varphi$ be in $L^{2}(0 , T , H^{1}_{0}(\Omega))$ and taking in
the variational inequality of the problem {\rm\ref{pbf-h}} where
$u= u_{h f}$,
$v= u_{h f}(t) \pm \varphi(t)$, we obtain
as $\|u_{h f}\|_{\mathcal{V}}$
is bounded for all $h>1$, we deduce that
$\|\dot{u}_{h f}\|_{L^{2}(0 , T , H^{-1}(\Omega))}$ is also bounded for all $h>1$.
Then we conclude that
 \begin{eqnarray}\label{eqW2}
\left.
\begin{array}{ll}
u_{h f}  \tow \eta  \mbox{ in }\mathcal{V} \mbox{ weak,  }
 \mbox{ and in } L^{\infty}(0 , T, H)
\mbox{ weak star,} \\
\mbox{and }
 \dot{u}_{h f}\tow \dot{\eta}\mbox{ in }L^{2}(0 , T, H^{-1}(\Omega)) \mbox{ weak}.
\end{array}
\right\}\quad
\end{eqnarray}

From the variational inequality of the problem {\rm\ref{pbf-h}}
 and taking $v\in K$ so $v= b$ on $\Gamma_{1}$, we obtain $a.e. \, t\in ]0 , T[$
\begin{eqnarray*}
\langle \dot{u}_{h f}, v- u_{h f}\rangle +
a(u_{h f} , v- u_{h f}) -  h\int_{\Gamma_{1}} |u_{h f}- b|^{2}ds  \geq
\nonumber\\
 \Phi(u_{h f}) - \Phi(v) + ( g , v - u_{h f})
- \int_{\Gamma_{3}} f (v - u_{h f}) ds, \qquad
\end{eqnarray*}
for all $v\in K$,
then as $h>0$ we have  a.e.  $t\in ]0 , T[$.
\begin{eqnarray}\label{eq6.1}
\langle \dot{u}_{h f}, v- u_{h f}\rangle +
a(u_{h f} , v- u_{h f})  \geq
  \Phi(u_{h f}) - \Phi(v) +\nonumber\\ (g , v - u_{h f})
- \int_{\Gamma_{3}} f (v - u_{h f}) ds,
 \,\, \forall v\in K. \qquad
\end{eqnarray}
So using (\ref{eqW2}) and passing to the limit when $h \to +\infty$ we obtain
\begin{eqnarray*}
\langle \dot{\eta}, v- \eta\rangle  +
a(\eta , v- \eta)   +\Phi(v)- \Phi(\eta) \geq ( g  , v - \eta)
\nonumber\\- \int_{\Gamma_{3}} f (v - \eta ) ds,
 \quad \forall v\in K  \quad a.e. \, t\in ]0 , T[,
\end{eqnarray*}
and $\eta(0)=u_b$.
Using the uniqueness of the solution of Problem \ref{pbf} we get that $\eta= u_{f}$.

To prove the strong convergence, we  take $v= u_{f}(t)$ in the variational inequality  of the {\it problem} \ref{pbf-h}
\begin{eqnarray*}
&&\langle \dot{u}_{h f}  , u_{f} - u_{h f} \rangle  +
a_{h}(u_{h f}  , u_{f} - u_{h f} ) +\Phi(u_{f})
\nonumber\\
&&- \Phi(u_{h f}) \geq ( g  ,  u_{f} - u_{h f} )
 + h\int_{\Gamma_{1}} b (u_{f}  - u_{h f} )ds
\nonumber\\
&&
- \int_{\Gamma_{3}} f (u_{f}  - u_{h f} )ds,
\end{eqnarray*}
a.e. \ $t\in ]0 , T[$, thus as $u_{f}= u_b$ on $\Gamma_{1}\times]0 , T[$,
we put \newline $\phi_{h}= u_{h f} - u_{f}$, so   a.e. $t\in ]0 , T[$ we have
\begin{eqnarray*}
\langle \dot{\phi_{h}} \, ,\,  \phi_{h}\rangle  +
a(\phi_{h} , \phi_{h})
+ h \int_{\Gamma_{1}} |\phi_{h}|^{2}ds + \Phi(u_{h f})-\Phi(u_{f})
\nonumber\\
\leq
\langle \dot{u}_{f} , \phi_{h}\rangle
 +
 a( u_{f} \, ,\, \phi_{h})  + ( g ,  \phi_{h})
- \int_{\Gamma_{3}} f \phi_{h} ds,\qquad
\end{eqnarray*}
so
\begin{eqnarray*}
{1\over 2}\|\phi_{h}\|_{L^{\infty}(0 , T, H)}^{2}  +
\lambda_{h}\|\phi_{h}\|_{\mathcal{V}}^{2}
 + \Phi(u_{h f})-\Phi(u_{f})\nonumber\\ \leq
-\int_{0}^{T}\langle \dot{u}_{f}(t), \phi_{h}(t)\rangle dt
-\int_{0}^{T} a(u_{f}(t), \phi_{h}(t))dt
\nonumber\\
+ \int_{0}^{T}( g(t)  , \phi_{h}(t)) dt
- \int_{0}^{T}\int_{\Gamma_{3}} f \phi_{h} ds dt.
\end{eqnarray*}
Using the weak semi-continuity of $\Phi$ and the weak convergence (\ref{eqW2})  the right side
of the just above inequality tends to zero when $h\to +\infty$, then we deduce the strong convergence
of $\phi_{h}=u_{h f}-u_{f}$ to $0$ in $\mathcal{V} \cap L^{\infty}(0 , T, H) $, for all
$f\in   \mathcal{F_{-}}$ and
 the proof holds.   $\Box$
\end{proof}

 We prove now the  following lemma by using the Aubin compactness arguments.
This Lemma \ref{lem1.4} is very important and necessary  which  allow us to conclude this paper.
Indeed this result is needed   to pass to the limit exactly  in the last term of
 the inequality (\ref{eqab})  in the proof of the main Theorem \ref{th6.1}.

\begin{lemma}\label{lem1.4}
Let $u_{{h f_{op}}_{h}}$ the state system defined by the unique solution of Problem {\rm\ref{pbf-h}}, where
the flux $f$ is replaced by $f_{op_{h}}$. Then,  for $h\to +\infty$, we have
\begin{eqnarray}\label{a}
 u_{{h f_{op}}_{h}}\to  u_{f}  \quad in \quad  L^{2}((0, T)\times\partial\Omega),
\end{eqnarray}
where $u_{f}$ is the the state system defined by the unique solution of Problem {\rm\ref{pbf}} with
the flux $f$ on $\Gamma_{3}$.
\end{lemma}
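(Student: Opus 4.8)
The plan is to establish the claimed convergence in $L^2((0,T)\times\partial\Omega)$ by an Aubin–Lions compactness argument applied to the sequence $u_{h f_{op_h}}$, and then to identify the limit using the convergence results already available. Write for brevity $w_h = u_{h f_{op_h}}$, the state associated to the optimal control $f_{op_h}$ of Problem~\ref{nph}. First I would note that $f_{op_h}\in\mathcal{F_-}$ and, since $J_h(f_{op_h})\le J_h(0)$ and $J_h(0)$ is bounded uniformly in $h$ (the state $u_{h0}$ being controlled by the data through the coercivity constant $\lambda_h$, which for $h>1$ equals $\lambda_1$ and is thus independent of $h$), the sequence $\{f_{op_h}\}$ is bounded in $\mathcal{F}=L^2((0,T)\times\Gamma_3)$. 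Consequently, by the a priori estimates derived exactly as in the proof of Lemma~\ref{l6.1} (which only used boundedness of the flux $f$ in $\mathcal{F}$), the sequence $\{w_h\}$ is bounded in $\mathcal{V}=L^2(0,T;V)\cap L^\infty(0,T;H)$ and $\{\dot w_h\}$ is bounded in $L^2(0,T;H^{-1}(\Omega))$, uniformly for $h>1$.

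Next I would invoke the Aubin–Lions–Simon compactness theorem: since $V=H^1(\Omega)$ embeds compactly into $L^2(\Omega)=H$, which embeds continuously into $H^{-1}(\Omega)$, the set of functions bounded in $L^2(0,T;V)$ with time derivative bounded in $L^2(0,T;H^{-1}(\Omega))$ is relatively compact in $L^2(0,T;H)$. Hence, up to a subsequence, $w_h\to w$ strongly in $L^2((0,T)\times\Omega)$ and weakly in $\mathcal{V}$, with $\dot w_h\rightharpoonup\dot w$ weakly in $L^2(0,T;H^{-1}(\Omega))$. To upgrade strong convergence from the interior to the boundary $\partial\Omega$, I would use that the trace operator $\gamma:H^1(\Omega)\to H^{1/2}(\partial\Omega)\hookrightarrow\hookrightarrow L^2(\partial\Omega)$ is compact, so that the same Aubin-type argument applied with the middle space $L^2(\partial\Omega)$ (or, equivalently, interpolation of the strong $L^2(0,T;H)$ convergence with the $\mathcal{V}$-bound together with compactness of the trace) yields $w_h\to w$ strongly in $L^2((0,T)\times\partial\Omega)$.

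It then remains to identify $w=u_f$. For this I would pass to the limit in the variational inequality for $w_h$ against test functions $v\in K$ (so $v=b$ on $\Gamma_1$), exactly as in the passage from (\ref{eq6.1}) to its limit in the proof of Lemma~\ref{l6.1}: the terms $a(w_h,v-w_h)$, $\langle\dot w_h,v-w_h\rangle$, $\Phi(w_h)$ are handled by weak convergence and weak lower semicontinuity, while the flux term $\int_{\Gamma_3}f_{op_h}(v-w_h)\,ds$ requires the strong boundary convergence of $w_h$ together with the weak convergence $f_{op_h}\rightharpoonup f$ in $\mathcal{F}$; this identifies $w$ as the unique solution of Problem~\ref{pbf} with flux the weak limit $f$ of $f_{op_h}$. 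By uniqueness the whole sequence converges, and (\ref{a}) follows.

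The main obstacle I anticipate is the transfer of strong convergence to the boundary: interior strong $L^2$ convergence does not by itself control traces, so one genuinely needs the compactness of the trace embedding $H^{1/2}(\partial\Omega)\hookrightarrow\hookrightarrow L^2(\partial\Omega)$ combined with the uniform $\mathcal{V}$-bound and the uniform bound on $\dot w_h$ in a negative-order space — i.e.\ a boundary version of Aubin–Lions. A secondary subtlety is verifying that the uniform-in-$h$ estimates of Lemma~\ref{l6.1}, which there were stated for fixed $f$, remain valid when $f$ is replaced by the $h$-dependent optimal control $f_{op_h}$; this is fine precisely because those estimates depend on $f$ only through its $\mathcal{F}$-norm, and $\|f_{op_h}\|_{\mathcal{F}}$ is bounded via the cost functional.
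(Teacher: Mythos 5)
Your proposal is correct and follows essentially the same route as the paper: uniform bounds on $u_{h f_{op_h}}$ in $\mathcal{V}$ and on $\dot u_{h f_{op_h}}$ in $L^{2}(0,T;H^{-1}(\Omega))$, then Aubin compactness, then continuity of the trace operator. The only difference is cosmetic: the paper applies the Aubin lemma directly with the intermediate space $H^{2/3}(\Omega)$, whose trace maps continuously into $L^{2}(\partial\Omega)$, whereas you first obtain strong convergence in $L^{2}(0,T;L^{2}(\Omega))$ and then upgrade via interpolation against the $\mathcal{V}$-bound --- note that your first suggested variant (Aubin with ``middle space'' $L^{2}(\partial\Omega)$) does not literally fit the triple $X\hookrightarrow\hookrightarrow B\hookrightarrow Y$ since $L^{2}(\partial\Omega)$ is not a space of functions on $\Omega$ embedding into $H^{-1}(\Omega)$, but your interpolation alternative is sound and amounts to the same thing as the paper's choice of $H^{2/3}(\Omega)$.
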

\begin{proof}
Let consider the variational inequality  of Problem {\rm\ref{pbf-h}} with $u=u_{{h f_{op}}_{h}}$
and $f=f_{op_{h}}$ i.e.
\begin{eqnarray}\label{aa}
 <\dot{u}_{{h f_{op}}_{h}}  , v-u_{{h f_{op}}_{h}}> + a_{h}(u_{{h f_{op}}_{h}} , v-u_{{h f_{op}}_{h}})
 +\Phi(v)
\nonumber\\
 -\Phi(u_{{h f_{op}}_{h}})  \geq (g , v-u_{{h f_{op}}_{h}})
- \int_{\Gamma_{3}}f_{op_{h}}      (v-u_{{h f_{op}}_{h}})ds
\nonumber\\
+ h \int_{\Gamma_{1}}b (v-u_{{h f_{op}}_{h}})ds, \quad \forall v\in V,\qquad
\end{eqnarray}
and let $\varphi \in L^{2}(0, T; H^{1}_{0}(\Omega))$, and set $v = u_{{h f_{op}}_{h}}(t) \pm \varphi(t)$ in (\ref{aa}),
we get
\begin{eqnarray*}
 <\dot{u}_{{h f_{op}}_{h}}  , \varphi>
   = (g , \varphi) - a(u_{{h f_{op}}_{h}} , \varphi).
\end{eqnarray*}
By integration in times for $t\in (0 , T)$, we get
\begin{eqnarray*}
\int_{0}^{T} <\dot{u}_{{h f_{op}}_{h}}  , \varphi> dt
   = \int_{0}^{T}(g , \varphi)dt - \int_{0}^{T}a(u_{{h f_{op}}_{h}} ,  \varphi)dt
\end{eqnarray*}
thus for $A=(c {\|g\|}_{\mathcal{H}} +  {\|u_{{h f_{op}}_{h}}\|}_{\mathcal{V}})$, we get
\begin{eqnarray*}
|\int_{0}^{T} <\dot{u}_{{h f_{op}}_{h}}  , \varphi> dt|\leq A
 {\|\varphi\|}_{L^{2}(0, T; H^{1}_{0}(\Omega))}
\end{eqnarray*}
 where $c$ comes from the Poincar\'e inequality,  and  as in Lemma \ref{l6.1} we can obtain that
$u_{{h f_{op}}_{h}}$ is bounded in $\mathcal{V}$, so there exists a  positive constant
$C$   such that
\begin{eqnarray}\label{b}
\|\dot{u}_{{h f_{op}}_{h}}\|_{L^{2}(0, T; H^{-1}(\Omega))}\leq C.
\end{eqnarray}
Using now the Aubin compactness arguments, see for example \cite{Foias} with the three
Banach spaces
$V$, $H^{{2\over 3}}(\Omega)$ and $H^{-1}(\Omega)$, then
\begin{eqnarray*}
 u_{{h f_{op}}_{h}}\to  u_{f}  \quad L^{2}(0, T; H^{{2\over 3}}(\Omega)).
\end{eqnarray*}
As the trace operator $\gamma_{0}$ is continuous from $H^{{2\over 3}}(\Omega)$
to $L^{2}(\partial\Omega)$, then the result follows.  $\Box$
\end{proof}

We  give now, without need to use the notion of adjoint states \cite{GT2008, JLL}, the convergence
 result which  generalizes the result obtained
 in \cite{MT2007} for a  parabolic variational equalities (see also
\cite{arada2000, BEM2003,   belgacem2003, GT2003, GT2008}). Other optimal control problems gouverned by variational
inequalities are given in \cite{Barbu1984, Reyes, Mingot1}.

\begin{theorem}\label{th6.1}
Let $u_{{h f_{op}}_{h}}\in \mathcal{V}$, ${f_{op}}_{h}\in \mathcal{F_{-}}$  and $u_{f_{op}}\in \mathcal{V}$,
$f_{op}\in \mathcal{F_{-}}$ be respectively
 the state systems and the optimal controls defined in the problems {\rm(\ref{pbf-h})}
 and {\rm(\ref{pbf})}.
Then
\begin{eqnarray}\label{6.1}
 \lim_{h\to +\infty}\|u_{h f_{op_{h}}}-u_{f_{op}}\|_{\mathcal{V}}=\nonumber\\
=
 \lim_{h\to +\infty}\|u_{h f_{op_{h}}}-u_{f_{op}}\|_{L^{\infty}(0 , T , H)} ,
\nonumber\\
= \lim_{h\to +\infty}\|u_{h f_{op_{h}}}-u_{f_{op}}\|_{L^{2}((0 , T)\times\Gamma_{1})}= 0,\qquad
\end{eqnarray}

\begin{equation}\label{6.2}
 \lim_{h\to +\infty}\|f_{op_{h}}-f_{op}\|_{\mathcal{F}}= 0.
\end{equation}
\end{theorem}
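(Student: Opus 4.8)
The plan is to combine the two convergence mechanisms already established: the variational-inequality convergence of Lemma \ref{l6.1} (which handles the state system for a \emph{fixed} flux) and the compactness statement of Lemma \ref{lem1.4} (which handles the trace term when the flux is the optimal one $f_{op_h}$). The delicate point is that $f_{op_h}$ moves with $h$, so Lemma \ref{l6.1} does not apply directly; one must first extract a weak limit for the controls and identify it. Concretely, I would first show that $\{f_{op_h}\}$ is bounded in $\mathcal{F}$: by optimality of $f_{op_h}$ we have $J_h(f_{op_h})\le J_h(0)=\tfrac12\|u_{h0}\|_{\mathcal H}^2$, and the right-hand side is bounded uniformly in $h$ by the a priori estimates of Lemma \ref{l6.1} (applied to $f=0$), whence $\tfrac M2\|f_{op_h}\|_{\mathcal F}^2\le J_h(f_{op_h})\le C$. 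Since $\mathcal{F_-}$ is closed and convex, it is weakly closed, so along a subsequence $f_{op_h}\tow f^\ast$ weakly in $\mathcal{F}$ with $f^\ast\in\mathcal{F_-}$.

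Next I would pass to the limit in the state equation. With $f=f_{op_h}$ the solutions $u_{h f_{op_h}}$ satisfy uniform bounds in $\mathcal{V}\cap L^\infty(0,T;H)$ and $\dot u_{h f_{op_h}}$ is bounded in $L^2(0,T;H^{-1}(\Omega))$ — exactly as recorded in \eqref{b} and the discussion preceding it — so, up to a further subsequence, $u_{h f_{op_h}}\tow \xi$ weakly in $\mathcal{V}$, weak-$\ast$ in $L^\infty(0,T;H)$, with $\dot u_{h f_{op_h}}\tow\dot\xi$ weakly in $L^2(0,T;H^{-1}(\Omega))$, and $u_{h f_{op_h}}\to b$ strongly in $L^2((0,T)\times\Gamma_1)$ (forcing $\xi=b$ on $\Gamma_1$ and $\xi(0)=u_b$). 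Starting from the localized form \eqref{eq6.1}–type inequality with $v\in K_b$, the term $h\int_{\Gamma_1}|u_{h f_{op_h}}-b|^2\,ds$ is dropped as nonnegative on the correct side, $\Phi$ is weakly lower semicontinuous, and the flux term $\int_{\Gamma_3}f_{op_h}(v-u_{h f_{op_h}})\,ds$ passes to the limit because $f_{op_h}\tow f^\ast$ weakly in $\mathcal{F}$ while $u_{h f_{op_h}}\to u_{f^\ast}$ \emph{strongly} in $L^2((0,T)\times\Gamma_3)$ by Lemma \ref{lem1.4}. Hence $\xi$ solves Problem \ref{pbf} with flux $f^\ast$, and by uniqueness $\xi=u_{f^\ast}$.

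Then I would identify $f^\ast=f_{op}$ via a standard lower-semicontinuity / optimality argument. On one hand, weak lower semicontinuity of the norms gives
\begin{eqnarray*}
J(f^\ast)={1\over2}\|u_{f^\ast}\|_{\mathcal H}^2+{M\over2}\|f^\ast\|_{\mathcal F}^2
\le \liminf_{h\to+\infty}\Big({1\over2}\|u_{h f_{op_h}}\|_{\mathcal H}^2+{M\over2}\|f_{op_h}\|_{\mathcal F}^2\Big)
=\liminf_{h\to+\infty}J_h(f_{op_h}).
\end{eqnarray*}
On the other hand, for the fixed control $f_{op}$ we have $J_h(f_{op_h})\le J_h(f_{op})$ by optimality, and $J_h(f_{op})\to J(f_{op})$ because $u_{h f_{op}}\to u_{f_{op}}$ strongly in $\mathcal{H}$ by Lemma \ref{l6.1} (with $f=f_{op}$ fixed). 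Combining, $J(f^\ast)\le J(f_{op})$; since $f_{op}$ is the \emph{unique} minimizer of $J$ over $\mathcal{F_-}$ (Theorem \ref{th3}) and $f^\ast\in\mathcal{F_-}$, we get $f^\ast=f_{op}$, and this limit is independent of the subsequence, so the whole sequence converges.

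Finally I would upgrade weak to strong convergence. From $J(f_{op})\le\liminf J_h(f_{op_h})\le\limsup J_h(f_{op_h})\le\lim J_h(f_{op})=J(f_{op})$ we obtain $J_h(f_{op_h})\to J(f_{op})$; together with $\|u_{h f_{op_h}}\|_{\mathcal H}\to\|u_{f_{op}}\|_{\mathcal H}$ (this follows from the strong convergence $u_{h f_{op_h}}\to u_{f_{op}}$ in $\mathcal{V}\cap L^\infty(0,T;H)$, which one proves exactly as in the last part of Lemma \ref{l6.1}: take $v=u_{f_{op}}(t)$ in \eqref{aa}, set $\phi_h=u_{h f_{op_h}}-u_{f_{op}}$, use coercivity of $a_h$ with constant $\lambda_h$, and check that the right-hand side $\to0$ using \eqref{eqW2}-type weak convergences and the weak semicontinuity of $\Phi$), this yields $\|f_{op_h}\|_{\mathcal F}\to\|f_{op}\|_{\mathcal F}$. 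Weak convergence plus convergence of norms in the Hilbert space $\mathcal{F}$ gives $f_{op_h}\to f_{op}$ strongly, i.e. \eqref{6.2}. The bounds $\|\phi_h\|_{\mathcal V},\|\phi_h\|_{L^\infty(0,T;H)},\|\phi_h\|_{L^2((0,T)\times\Gamma_1)}\to0$ established along the way give \eqref{6.1}. The main obstacle is the flux term in the state inequality: it is the product of two merely weakly convergent factors, and it is precisely Lemma \ref{lem1.4} (the Aubin compactness giving strong trace convergence on $\partial\Omega\supset\Gamma_3$) that rescues the passage to the limit there — which is why that lemma is singled out as the crucial step. $\Box$
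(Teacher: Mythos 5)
Your proposal is correct and follows essentially the same route as the paper's proof: boundedness of $\{f_{op_h}\}$ via $J_h(f_{op_h})\le J_h(0)$, extraction of weak limits, passage to the limit in the state inequality using Lemma \ref{lem1.4} for the $\Gamma_3$ flux term, identification $\tilde f=f_{op}$ by lower semicontinuity plus uniqueness of the minimizer, the coercivity/energy argument for the strong state convergence, and finally norm convergence plus weak convergence in the Hilbert space $\mathcal{F}$ for \eqref{6.2}. No substantive differences to report.
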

\begin{proof}
We have first
\begin{eqnarray*}
 J_{h}(f_{op_{h}})= {1\over 2}\|u_{h f_{op_{h}}}\|_{\mathcal{H}}^{2}
 + {M\over 2}\|f_{op_{h}}\|_{\mathcal{F} }^{2}\leq
\nonumber\\
\leq {1\over 2}\|u_{h f}\|_{\mathcal{H}}^{2} + {M\over 2}\|f\|_{\mathcal{F} }^{2},
\end{eqnarray*}
for all $f\in \mathcal{F}_{-}$,
then for $f=0\in \mathcal{F}_{-} $ we obtain that
\begin{eqnarray}\label{e6.3}
J_{h}(f_{op_{h}})= {1\over 2}\|u_{h f_{op_{h}}}\|_{\mathcal{H}}^{2}
+ {M\over 2}\|f_{op_{h}}\|_{\mathcal{F}}^{2}\leq {1\over 2}\|u_{h 0}\|_{\mathcal{H}}^{2}\qquad
\end{eqnarray}
where $u_{h 0}\in \mathcal{V}$  is the solution of the following parabolic variational inequality
\begin{eqnarray*}
\langle \dot{u}_{h 0} , v- u_{h 0}\rangle +
 a_{h}( u_{h 0} , v- u_{h 0}) +\Phi(v)- \Phi(u_{h 0})
 \nonumber\\ \geq
 \int_{\Omega} g(v- u_{h 0})dx
+     h\int_{\Gamma_{1}} b(v- u_{h 0})ds, \quad a.e. \, t\in ]0 , T[
\end{eqnarray*}
for all $v\in V$ and $u_{h 0}(0)= u_{b}$.

 Taking $v= u_{b}\in K_b $  we get that
   $\|u_{h 0}- u_{b}\|_{\mathcal{V}}$ is bounded independently of $h$, then
  $\|u_{h 0}\|_{\mathcal{H}}$ is bounded independently of $h$. So we deduce with (\ref{e6.3}) that
 $\|u_{h f_{op_{h}}}\|_{\mathcal{H}}$ and $\|f_{op_{h}}\|_{\mathcal{F} }$ are also bounded independently  of $h$.
So there exist $\tilde{f}\in \mathcal{F}_{-}$ and $\eta$ in  $\mathcal{H}$ such that
\begin{eqnarray}\label{6.5}
 f_{op_{h}} \tow  \tilde{f} \mbox{ in }  \mathcal{F}_{-}  \,\, {\rm and }\,\,  u_{h f_{op_{h}}}\tow\eta
\mbox{ in } \mathcal{H} \,\,  (weakly).
\end{eqnarray}

Taking  now $v=u_{f_{op}}(t)\in K_b$ in {\it Problem } (\ref{pbf-h}), for $t\in ]0 , T[$,  with $u= u_{h f_{op_{h}}}$
and $f=f_{op_{h}}$, we obtain
\begin{eqnarray*}
\langle \dot{u}_{h f_{op_{h}}} , u_{f_{op}} - u_{h f_{op_{h}}}\rangle +
 a_{1}(  u_{h f_{op_{h}}} , u_{f_{op}} -  u_{h f_{op_{h}}})\qquad
\nonumber\\
+(h-1) \int_{\Gamma_{1}}   u_{h f_{op_{h}}}   (u_{f_{op}}  -  u_{h f_{op_{h}}} )ds
+ \Phi(u_{f_{op}})\qquad
\nonumber\\
- \Phi( u_{h f_{op_{h}}})
\geq
 ( g  , u_{f_{op}}  -  u_{h f_{op_{h}}} )
+
h\int_{\Gamma_{1}} b (u_{f_{op}}  - u_{h f_{op_{h}}} )ds
\nonumber\\
-\int_{\Gamma_{3}}    f_{op_{h}}  (u_{f_{op}}  -  u_{h f_{op_{h}}} )ds
, \quad a.e. \, t\in ]0 , T[.\qquad
\end{eqnarray*}
As $u_{f_{op}}= b$ on $\Gamma_{1}\times[0 , T]$, taking
$\phi_{h}= u_{f_{op}} - u_{h f_{op_{h}}}$
 we obtain
\begin{eqnarray*}
{1\over 2}\|\phi_{h}\|_{L^{\infty}(0 , T;  H)}^{2}
+ \lambda_{1}\|\phi_{h}\|_{\mathcal{V}}^{2}
+(h-1) \int_{0}^{T}\int_{\Gamma_{1}}|\phi_{h}(t)|^{2}ds dt\qquad
\nonumber\\ \leq
 \int_{0}^{T}\int_{\Gamma_{3}}f_{op_{h}}  \phi_{h} ds dt
-\int_{0}^{T}(g(t)  , \phi_{h}(t)) dt\qquad\qquad
\nonumber\\
+ \int_{0}^{T}\int_{\Gamma_{2}} q  |\phi_{h}(t)|dsdt
+\int_{0}^{T}\langle \dot{u}_{f_{op}}(t) \phi_{h}(t)\rangle dt\qquad
\nonumber\\\qquad
+
\int_{0}^{T}a( u_{f_{op}}(t) , \phi_{h}(t))dt.\qquad\qquad
\end{eqnarray*}
As   $f_{op_{h}}$ is bounded in    $\mathcal{F}_{-}$,  from (\ref{b})
$\dot{u}_{f_{op}}$  is bounded  in $L^{2}(0, T; H^{-1}(\Omega))$,
and $u_{h f_{op_{h}}}$ is also bounded in $\mathcal{V}$,
all independently on $h$, so   there exists a positive constant $C$
which does not depend on $h$ such that
\begin{eqnarray*}\label{6.q}
\|\phi_{h}\|_{\mathcal{V}}= \|u_{h f_{op_{h}}} -u_{f_{op}}\|_{\mathcal{V}}\leq C,
\quad \|\phi_{h}\|_{L^{\infty}(0 , T, H)}\leq C
\nonumber\\
\mbox{ and  }
 (h-1)\int_{0}^{T}\int_{\Gamma_{1}} |u_{h f_{op_{h}}}- b|^{2}ds dt \leq C,
\end{eqnarray*}
then $\eta \in \mathcal{V}$ and
\begin{eqnarray}\label{6.6}
u_{h f_{op_{h}}} \tow \eta \mbox{ in } \mathcal{V} \, \mbox{ and in }  L^{\infty}(0 , T ,  H)
 \mbox{ weak star }
\end{eqnarray}
\begin{eqnarray}\label{6.7}
 u_{h f_{op_{h}}} \to  b\quad in \quad L^{2}((0 , T)\times\Gamma_{1}) \,  \mbox{ strong},
\end{eqnarray}
so $\eta(t)\in K_b$ for all $t\in [0 , T]$.

Now taking $v\in K$  in {\it Problem }(\ref{pbf-h}) where $u= u_{h f_{op_h}}$ and $f=f_{op_{h}}$ so
 \begin{eqnarray*}\label{}
 \langle \dot{u}_{h f_{op_{h}}}  , v- u_{h f_{op_{h}}}  \rangle +
 a_{h}(u_{h f_{op_{h}}}         , v- u_{h f_{op_{h}}}   )
+\Phi(v)
\nonumber\\
- \Phi(u_{h f_{op_{h}}} )
\geq ( f_{op_{h}}  ,  v- u_{h f_{op_{h}}})
+ h\int_{\Gamma_{1}} b(v- u_{h f_{op_{h}}}  )ds
\nonumber\\
- \int_{\Gamma_{3}} f_{op_{h} }    (v- u_{h f_{op_{h}}}  )ds
, \quad a.e. \, t\in ]0 , T[
 \end{eqnarray*}
as $v\in K_b$ so $v=b$ on $\Gamma_{1}$, thus we have
 \begin{eqnarray*}\label{}
&& \langle \dot{u}_{h f_{op_{h}}}  , u_{h f_{op_{h}}} -v  \rangle
+ a(u_{h f_{op_{h}}}  ,    u_{h f_{op_{h}}}  -v) +
\nonumber\\
&&
 h \int_{\Gamma_{1}}|u_{h f_{op_{h}}} -b|^{2}ds  + \Phi(u_{h f_{op_{h}}}  )
-\Phi(v) -( g ,  v- u_{h f_{op_{h}}} )
\nonumber\\
&&\leq\int_{\Gamma_{3}} f_{op_{h} }    (v- u_{h f_{op_{h}}}  )ds
 \quad a.e. \, t\in ]0 , T[.
 \end{eqnarray*}

Thus
\begin{eqnarray}\label{eqab}
\langle \dot{u}_{h f_{op_{h}}}  ,   u_{h f_{op_{h}}} -v \rangle + a( u_{h f_{op_{h}}}   , u_{h f_{op_{h}} } -v)
\nonumber\\
 + \Phi(u_{h f_{op_{h}}} ) -\Phi(v)
\leq
-( g , v- u_{h f_{op_{h}}} )
\nonumber\\
 -     \int_{\Gamma_{3}} f_{op_{h} }    (v- u_{h f_{op_{h}}}  )ds
 \quad a.e. \, t\in ]0 , T[.
 \end{eqnarray}

Using Lemma \ref{lem1.4},  (\ref{6.5}) and (\ref{6.6}),  we deduce that \cite{Duvaut-JJL72, Ta1982}
\begin{eqnarray*}\label{}
 \langle \dot{\eta} , v - \eta \rangle +  a(\eta , v- \eta)
 +\Phi(v)-  \Phi(\eta)
 \geq  (f ,  v -\eta)
\nonumber\\
 -  \int_{\Gamma_{3}} \tilde{f}    (v- \eta) )ds       , \quad \forall v\in K, \quad a.e.  \, t\in]0 , T[,
 \end{eqnarray*}
so also by the uniqueness of the solution of {\it Problem }(\ref{pbf}) we obtain that
\begin{eqnarray}\label{xi}
u_{\tilde{f}}  = \eta.
 \end{eqnarray}

We prove that  $\tilde{f} = f_{op}$.  Indeed we have
 \begin{eqnarray*}\label{dd}
J(\tilde{f} )&=&{1\over 2} \|\eta \|_{\mathcal{H}}^{2} + {M\over 2} \|\tilde{f} \|_{\mathcal{F}}^{2}
\nonumber\\
 &\leq&\liminf_{h\to +\infty} \left\{{1\over 2} \|u_{h f_{op_{h}}}\|_{\mathcal{H}}^{2}
 + {M\over 2} \|f_{op_{h}}\|_{   \mathcal{F} }^{2} \right\}
\nonumber\\
&=&\liminf_{h\to +\infty} J_{h}(f_{op_{h}})
\nonumber\\
&\leq& \liminf_{h\to +\infty} J_{h}(f)
=\liminf_{h\to +\infty}
\left\{{1\over 2} \|u_{h f}\|_{\mathcal{H}}^{2} + {M\over 2} \|f\|_{\mathcal{F}}^{2} \right\}
 \end{eqnarray*}
so using now the strong convergence $u_{h f}\to u_{f}$ as\newline $h\to
+\infty,\; \forall \; f\in \mathcal{F}_{-} $ (see Lemma \ref{l6.1}), we obtain that
 \begin{eqnarray}\label{6.9}
J(\tilde{f} )\leq \liminf_{h\to +\infty} J_{h}(f_{op_{h}}) &\leq&
 {1\over 2} \|u_{f}\|_{\mathcal{H}}^{2} + {M\over 2} \|f\|_{\mathcal{F}}^{2}
\nonumber\\
&=& J(f),
\quad \forall f\in \mathcal{F}_{-}
 \end{eqnarray}

then by the uniqueness of the optimal control {\it Problem} (\ref{pbf}) we get
 \begin{eqnarray}\label{f}
\tilde{f} = f_{op}.
 \end{eqnarray}

Now we prove the  strong convergence of  $u_{h f_{op_{h}}}$ to $\eta=u_{f}$ in
$\mathcal{V}\cap L^{\infty}(0 , T ; H)\cap L^{2}(0 , T ; L^{2}(\Gamma_{1}))$,
 indeed taking $v=\eta$ in {\it Problem} (\ref{pbf-h}) where $u=u_{h f_{op_{h}}}$ and $f= f_{op_{h}}$,
as $\eta(t)\in K$ for $t\in [0 , T]$,  so $\eta =b$ on $\Gamma_{1}$, we obtain
\begin{eqnarray*}
&&{1\over 2} \|u_{h f_{op_{h}}} -\eta\|_{L^{\infty}(0 , T; H)}^{2} +
 \lambda_{1}\|u_{h f_{op_{h}}}-\eta\|_{\mathcal{V}}^{2} +
\nonumber\\
&&\int_{0}^{T} \{\Phi(u_{h f_{op_{h}}})- \Phi(\eta)\} dt
+\tilde{h} \|u_{h f_{op_{h}}} - \eta\|_{L^{2}((0 , T)\times\Gamma_{1})}^{2}
\nonumber\\
&&\leq \int_{0}^{T}( g , u_{h f_{op_{h}}}(t)-\eta(t))dt
- \int_{0}^{T}\langle \dot{\eta} ,   u_{h f_{op_{h}}}-\eta   \rangle dt +
\nonumber\\
&&
\int_{0}^{T}a(\eta(t) , \eta(t) - u_{h f_{op_{h}}}(t))
 - \int_{\Gamma_{3}} f_{op_{h}} (u_{h f_{op_{h}}}- \eta) )ds dt.
 \end{eqnarray*}
where $\tilde{h}=h-1$

Using (\ref{6.6}) and the weak semi-continuity of $\Phi$ we deduce that
\begin{eqnarray*}\label{}
\lim_{h\to +\infty}\|u_{h f_{op_{h}}} -\eta\|_{L^{\infty}(0 , T; H)}
=\lim_{h\to +\infty}\|u_{h f_{op_{h}}} -\eta\|_{\mathcal{V}} \nonumber\\
= \|u_{h f_{op_{h}}} - \eta\|_{L^{2}((0 , T)\times\Gamma_{1})} = 0,
 \end{eqnarray*}
and with (\ref{xi}) and (\ref{f}) we deduce (\ref{6.1}). Then from  (\ref{6.9}) and (\ref{f}) we can write
\begin{eqnarray}\label{eq6.11}
J(f_{op})&=&{1\over 2}\|u_{f_{op}}\|_{\mathcal{H}}^{2}
  + {M\over 2}\|f_{op}\|_{\mathcal{F} }^{2}\leq
\leq \liminf_{h\to+\infty} J_{h}(f_{op_{h}})\qquad \nonumber\\
&=& \liminf_{h\to+\infty} \left\{{1\over 2}\|u_{h f_{op_{h}}}\|_{\mathcal{H}}^{2}
+ {M\over 2}\|f_{op_{h}}\|_{\mathcal{F} }^{2}\right\}\nonumber\\
&\leq&  \lim_{h\to+\infty} J_{h}(f_{op}) = J(f_{op})
\end{eqnarray}
and using  the strong convergence (\ref{6.1}), we get
\begin{eqnarray}\label{eq6.13}
\lim_{h\to+\infty}\|f_{op_{h}}\|_{ \mathcal{F}}= \|f_{op}\|_{\mathcal{F}}.
\end{eqnarray}
Finally as
\begin{eqnarray}\label{eq6.14}
\|f_{op_{h}}- f_{op}\|_{\mathcal{F}}^{2} =
 \|f_{op_{h}}\|_{\mathcal{F}}^{2}+ \|f_{op}\|_{\mathcal{F}}^{2}
-2(f_{op_{h}} , f_{op})\nonumber\\
\end{eqnarray}
and by  the first part of (\ref{6.5}) we have
$$\lim_{h\to +\infty}\left(f_{op_{h}} , f_{op}\right) = \|f_{op}\|_{\mathcal{F}}^{2},$$
so from (\ref{eq6.13}) and (\ref{eq6.14}) we  get \mbox{(\ref{6.2})}. This ends  the proof. $\Box$
\end{proof}

\bigskip
\begin{corollary}
 Let $u_{{h f_{op}}_{h}}$ in $\mathcal{V}$, ${f_{op}}_{h}$ in $\mathcal{F_{-}}$,
  $u_{f_{op}}$ in $\mathcal{V}$ and
$f_{op}$ in $\mathcal{F_{-}}$ be respectively
 the state systems and the optimal controls defined in the problems {\rm(\ref{pbf-h})}
 and {\rm(\ref{pbf})}.
Then
\begin{eqnarray*}
 \lim_{h\to +\infty}|J_{h}(f_{op_{h}})- J({f_{op}})|= 0.
\end{eqnarray*}
\end{corollary}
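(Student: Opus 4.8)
The plan is to read this off directly from Theorem~\ref{th6.1}, splitting the difference $J_h(f_{op_h})-J(f_{op})$ into its state part and its control part. Using the definitions of $J_h$ and $J$ we write
$$
J_h(f_{op_h}) - J(f_{op}) = {1\over 2}\left(\|u_{h f_{op_h}}\|_{\mathcal{H}}^{2} - \|u_{f_{op}}\|_{\mathcal{H}}^{2}\right) + {M\over 2}\left(\|f_{op_h}\|_{\mathcal{F}}^{2} - \|f_{op}\|_{\mathcal{F}}^{2}\right),
$$
so it suffices to show that each of the two differences on the right-hand side tends to $0$ as $h\to+\infty$.

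First I would handle the state term. Since $\mathcal{V}=L^{2}(0,T;V)$ embeds continuously into $L^{2}(0,T;H)=\mathcal{H}$, the strong convergence $\|u_{h f_{op_h}}-u_{f_{op}}\|_{\mathcal{V}}\to 0$ from \eqref{6.1} gives $\|u_{h f_{op_h}}-u_{f_{op}}\|_{\mathcal{H}}\to 0$; by the reverse triangle inequality $\|u_{h f_{op_h}}\|_{\mathcal{H}}\to\|u_{f_{op}}\|_{\mathcal{H}}$, hence $\|u_{h f_{op_h}}\|_{\mathcal{H}}^{2}\to\|u_{f_{op}}\|_{\mathcal{H}}^{2}$. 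For the control term, \eqref{6.2} says $\|f_{op_h}-f_{op}\|_{\mathcal{F}}\to 0$, so again $\|f_{op_h}\|_{\mathcal{F}}\to\|f_{op}\|_{\mathcal{F}}$ and $\|f_{op_h}\|_{\mathcal{F}}^{2}\to\|f_{op}\|_{\mathcal{F}}^{2}$ (this is exactly \eqref{eq6.13}). Inserting both limits into the identity above yields $|J_h(f_{op_h})-J(f_{op})|\to 0$.

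Alternatively, and perhaps more in the spirit of the preceding proof, one may observe that the sandwich \eqref{eq6.11} already forces $\liminf_{h\to+\infty}J_h(f_{op_h})=J(f_{op})$, and then the strong convergences \eqref{6.1} and \eqref{eq6.13} upgrade this $\liminf$ to a genuine limit. Either way, there is no real obstacle: the corollary is an immediate consequence of the strong convergences established in Theorem~\ref{th6.1}, the only mild point being the use of the continuous embedding $\mathcal{V}\hookrightarrow\mathcal{H}$ to pass from convergence in $\mathcal{V}$ to convergence in $\mathcal{H}$.
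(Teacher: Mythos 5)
Your proposal is correct and matches the paper's argument, which simply invokes the definitions of $J$ and $J_h$ together with the strong convergences of the states and the controls established in Theorem \ref{th6.1}; you have merely written out the decomposition that the paper leaves implicit. No further comment is needed.
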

\begin{proof}
It follows from the definitions (\ref{npeq1}) and  (\ref{npeq2}),  and the convergences (\ref{6.1}) and (\ref{6.2}).
 $\Box$
\end{proof}

\section{Conclusion}\label{Conclusion}
The main difference here with our work \cite{MB-DT3} where the control variable was the function $g$, is
that we consider here as a  control variable the function $f$ given by the Neumann boundary condition on $\Gamma_{3}$.
This change induce in the variational problems \ref{pbf}   and \ref{pbf-h}, and also in the proofs of
Lemma \ref{l6.1} and Theorem \ref{th6.1},  a new integral term on  $\Gamma_{3}$.
The main difficulty here is in Section \ref{secLim} and the question is exactly  how to pass to the limit for $h\to +\infty$
in the last integral term on  $\Gamma_{3}$ in (\ref{eqab}). To overcome this  main difficulty
we have introduced the new Lemma \ref{lem1.4}, which is the key of our problem.
The idea of Lemma \ref{l6.1} and Theorem \ref{th6.1}  and their proofs are indeed similar to those of our work
\cite{MB-DT3} with the differences and difficulties mentioned just above.

\begin{ack}
This paper was partially sponsored by the Institut Camille Jordan  ST-Etienne University for first author
and the project PICTO Austral $\#$ 73 from ANPCyT  and Grant AFOSR FA9550-10-1-0023 for the second author.
\end{ack}

}
\end{multicols}

\begin{thebibliography}{99}
\bibitem{ACF2002}
A.Amassad, D.Chenais, C.Fabre.
Optimal control of an elastic contact problem involving Tresca friction law.
\emph{Nonlinear Analysis}, 2002,  48 : 1107-1135.

\bibitem{MB-EM2008}
 M.Boukrouche,  R.El Mir.
On a non-isothermal, non-Newtonian lubrication problem with Tresca law:
Existence and the behavior of weak solutions,
\emph{Nonlinear Analysis: Real World Applications},  2008, 9 (2): 674-692.

\bibitem{Duvaut-JJL72} G.Duvaut,  J.L.Lions.
\emph{Les in\'equations en M\'ecanique et en Physique.} Paris : Dunod, 1972.

\bibitem{KS1980}
D.Kinderlehrer,  G.Stampacchia.
 \emph{An introduction to variational inequalities and their applications}.
New York : Academic Press, 1980.

\bibitem{R1987}
 J.F.Rodrigues.
\emph{Obstacle problems in mathematical physics}.
Amsterdam: North-Holland,  1987.

\bibitem{TT}
E.D.Tabacman, D.A.Tarzia.
Sufficient and or necessary condition for the heat transfer
coefficient on $\Gamma_{1}$ and the heat flux on $\Gamma_{2}$ to
obtain a steady-state two-phase Stefan problem.
\emph{J. Diff. Equations},  1989,  77 (1) :  16-37.

\bibitem{T}
D.A.Tarzia.
Una familia de problemas que converge hacia el caso
estacionario del problema de Stefan a dos fases,
 \emph{Math. Notae}, 1979,  27 :157-165.

\bibitem{Br1972}
 H.Br\'ezis. Probl\`emes unilat\'eraux.
 \emph{J. Math. Pures Appl.}, 1972,  51 : 1-162.


\bibitem{Chipot2000} M.Chipot.
\emph{Elements of nonlinear Analysis.} Birkh\"auser Advanced Texts, 2000.
\bibitem{MB-DT3}
  M.Boukrouche, D.A.Tarzia.
Convergence of distributed optimal controls for second kind parabolic  variational inequalities.
\emph{Nonlinear Analysis: Real World Applications}, 2011, 12 :  2211-2224.

\bibitem{MB-DT4}
 M.Boukrouche, D.A.Tarzia.
On existence, uniqueness, and convergence, of optimal control problems
 governed by parabolic variational inequalities. Accepted in  Springer IFIP Series.
 \emph{25th IFIP TC7 Conference on System Modeling and Optimisation 2011}.


\bibitem{KeMu2008}
 S.Kesavan, T.Muthukumar.
Low-cost control problems on perforated and non-perforated domains.
 \emph{Proc. Indian Acad. Sci. (Math. Sci.)}, 2008,   118 (1) : 133-157.

\bibitem{KeSa1999}
 S.Kesavan, J.Saint Jean Paulin.
Optimal control on perforated domains.
 \emph{J. Math. Anal. Appl.}, 1997,  229 : 563-586.

\bibitem{JLL}
 J.L.Lions.
\emph{Contr\^ole optimal de syst\`emes gouvern\'es par des
\'equations aux d\'eriv\'ees partielles}.
Paris : Dunod, 1968.

\bibitem{T2010}
 F.Tr\"oltzsch.
\emph{Optimal control of partial differential equations: Theory,
methods and applications}. Providence : American Math. Soc., 2010.

\bibitem{MB-DT2}
 M.Boukrouche, D.A.Tarzia.
Convergence of distributed optimal controls
       for  elliptic variational inequalities.
\emph{Computational Optimization and Applications},
(2012) 53:375–393


 \bibitem{Foias}
 C.Foias, O.Manley, R.Rosa, R.Temam.
{\emph Navier-Stokes equations and turbulence}.
Cambridge University Press, 2001.

\bibitem{GT2008}
 C.M.Gariboldi, D.A.Tarzia.
Convergence of boundary optimal controls problems with
restrictions in mixed elliptic Stefan-like problems.
\emph{Adv. Diff. Eq. and Control Processes}, 2008, ( 1) : 113-132.

\bibitem{MT2007}
J.L.Menaldi, D.A.Tarzia.
A distributed parabolic control with mixed boundary
 conditions.
\emph{Asymptotic Anal.}, 2007,  52 : 227-241.


\bibitem{arada2000}
 N.Arada, H.El Fekih,   J.P.Raymond.
Asymptotic analysis of some control problems.
\emph{Asymptotic Analysis}, 2000,  24 : 343-366.

 \bibitem{BEM2003}
 F.Ben Belgacem, H.El Fekih, H.Metoui.
Singular perturbation for the Dirichlet boundary control of
elliptic problems.
\emph{ ESAIM: M2AN},  2003, 37 : 833-850.

\bibitem{belgacem2003}
 F.Ben Belgacem, H.El Fekih,  J.P.Raymond.
A penalized Robin approach for solving a parabolic equation
with nonsmooth Dirichlet boundary conditions.
\emph{Asymptotic Analysis},
2003,  34 : 121-136.


\bibitem{GT2003}
 C.M.Gariboldi, D.A.Tarzia.
Convergence of distributed optimal controls on the internal
energy in mixed elliptic problems when the heat transfer coefficient
goes to infinity. \emph{Appl. Math. Optim.}, 2003,  47 (3) :  213-230.

\bibitem{Barbu1984}
 V.Barbu.
\emph{Optimal control of variational inequalities}. Research Notes
in Mathematics, 100.  Boston : Pitman (Advanced Publishing Program), 1984.

\bibitem{Reyes}
 J.C.De Los Reyes.
Optimal control of a class of variational inequalities of the second kind.
  \emph{SIAM J. Control Optim.}, 2011, 49 : 1629-1658.

\bibitem{Mingot1}
 F.Mignot.
Contr\^ole dans les in\'equations variationelles elliptiques.
\emph{J. Functional Anal.}, 1976,  22 (2) : 130-185.

\bibitem{Ta1982}
 D.A.Tarzia.
Etude de l'in\'equation variationnelle propos\'ee par Duvaut pour le
probl\`eme de Stefan \`a deux phases, I.
 \emph{Boll. Unione Mat. Italiana}, 1982,  1B : 865-883.
\end{thebibliography}
\end{document}